\newtheorem{theorem}{Theorem}[section]
\newtheorem*{theorem*}{Theorem}
\newtheorem{lemma}[theorem]{Lemma}
\theoremstyle{definition}
\newtheorem{definition}[theorem]{Definition}
\newtheorem{problem}[theorem]{Problem}
\newtheorem{remark}[theorem]{Remark}
\numberwithin{equation}{section}
\newcommand{\all}{\hbox{for all}}
\newcommand{\bra}[2]{\langle#1,#2\rangle}
\newcommand{\Bra}[2]{\big\langle#1,#2\big\rangle}
\newcommand{\dbs}{^{**}}
\newcommand{\eps}{\varepsilon}
\newcommand{\exs}{\hbox{there exists}}
\newcommand{\fourth}{{\textstyle\frac{1}{4}}}
\newcommand{\lr}{\Longrightarrow}
\newcommand{\NN}{\mathbb N}
\newcommand{\RR}{\mathbb R}
\newcommand{\st}{\hbox{such that}}
\newcommand{\toto}{\rightrightarrows}
\newcommand{\trs}{^{***}}
\newcommand{\ts}{\textstyle}
\newcommand{\wh}{\widehat}
\newcommand{\Lem}{Lemma~\ref}
\newcommand{\Rem}{Remark~\ref}
\newcommand{\Rems}{Remarks~\ref}
\newcommand{\Sec}{Section~\ref}
\newcommand{\Thm}{Theorem~\ref}
\title{Gossez's skew linear map and its pathological maximally monotone multifunctions}
\author{
Stephen Simons
\thanks{
Department of Mathematics, University of California, Santa Barbara, CA\ 93106-3080, U.S.A.
Email: \texttt{stesim38@gmail.com}.}}
\date{}
\begin{document}

\maketitle

\begin{abstract}\noindent
In this note, we give a generalization of Gossez's example of a maximally monotone multifunction such that the closure of its range is not convex, using more elementary techniques  than in Gossez's original papers.   We also discuss some new properties of Gossez's skew linear operator and its adjoint.
\end{abstract}

{\small \noindent {\bfseries 2010 Mathematics Subject Classification:}
{Primary 47H05; Secondary 47N10, 46A20, 46A22.}}

\noindent {\bfseries Keywords:} Skew linear operator, maximal monotonicity, duality map.

\section{Introduction}
In \cite{GOSSEZRANGE} and \cite{GOSSEZCONV}, Gossez gives an example of a skew linear map $G\colon\ \ell_1 \to \ell_\infty = \ell_1^*$, and proves that there exist arbitrarily small values of $\lambda > 0$ such that $\overline{R(G + \lambda J)}$ is not convex.   \big(If $E$ is a Banach space, $J\colon\ E \toto E^*$ is the {\em duality map}, defined by {\em $x^* \in Jx$ exactly when $\|x^*\| =  \|x\|$ and $\bra{x}{x^*} = \|x\|^2$}.   See eqn. \eqref{A3}.\big)
   In \Thm{PATHOthm}, we shall prove the stronger result that $\overline{R(G + \lambda J)}$ is not convex whenever $0  < \lambda < 4$.   In particular, $\overline{R(G + J)}$ is not convex.   \big($R(\cdot)$ stands for ``range of''.\big)
\par
Gossez's analysis goes by way of the {\em monotone extension to the dual}\break introduced in \cite{GOSSEZ}.   This was critical to his definition of operators of {\em dense type}, which have been so important in the modern theory of monotone\break multifunctions.   In addition to the use of the monotone extension to the dual,  \cite{GOSSEZRANGE} and \cite{GOSSEZCONV} use measure theory on the Stone-\u Cech compactification of the positive integers.   In this paper, we use mainly elementary functional analysis, but we will make some comments about the measure theoretic approach in \Rems{BETArem} and \ref{GTMrem}.
\par
In \Sec{GENsec}, we define a {\em skew} linear operator, $A$, from a Banach space, $E$ into its dual and, in \Thm{Athm}, we establish an upper bound for the quadratic form $-\bra{A^* x\dbs}{x\dbs}$ on $E\dbs \times E\dbs$ under certain circumstances.   See eqn.\ \eqref{A2}.
\par
In eqn.\eqref{GDEF}, we give the exact formula for $G$.    Our presentation exploits the fact that $G$ can be ``factorized through $c$''.   In \Lem{GTlem}, we discuss a particular element $x_0\dbs \in \ell_1\dbs = \ell_\infty^*$ and give formulae for $G^*x_0\dbs$ and $\bra{G^*x_0\dbs}{x_0\dbs}$.   \Lem{PROPlem} appears in \cite[Proposition, p.\ 360]{GOSSEZCONV}, but with a very different proof.   \Lem{PROPlem} leads rapidly to our main result, \Thm{PATHOthm}.
\par
In \Sec{TRIsec}, we give some technical results on $\ell_1$, $\ell_1^*$, $\ell_1\dbs$ and $\ell_1\trs$ and, in \Thm{Wthm}, define a particular element $w\trs$ of $\ell_1\trs$ that will be used in \Sec{FINERsec} to obtain formulae for $G^*x\dbs$ and $\bra{G^*x\dbs}{x\dbs}$ for {\em general} $x\dbs \in \ell_1\dbs$.   It was proved in \cite[Example, p.\ 89]{GOSSEZRANGE} and  \cite[Example 14.2.2, pp.\ 161--162]{HEINZ} that, for all $x\dbs \in \ell_1\dbs$, $\bra{G^*x\dbs}{x\dbs} \le 0$.   In \eqref{TRZ1}, we strengthen these results by showing that $\bra{G^* x\dbs}{x\dbs} = -{\bra{x\dbs}{w\trs}}^2$.   
\par 
All Banach spaces considered in this note are {\em real}.
\par
The author would like to express his thanks to Heinz Bauschke for reading the first draft of this paper and making a number of extremely valuable suggestions.   He would also like to thank Jerry Beer for a very illuminating discussion on compactifications, which simplified considerably the analysis in \Rem{BETArem}.
\section{On skew linear operators on general Banach spaces}\label{GENsec}  
\begin{definition}\label{SKEWdef}
Let $E$ be a nonzero Banach space and $A\colon\ E \to E^*$ be linear.   We say that $A$ is {\em skew} if,
\begin{equation}\label{SKEW1}
\all\ w,x \in E,\ \bra{w}{Ax} = -\bra{x}{Aw},
\end{equation}
or, equivalently,   
\begin{equation}\label{SKEW2}
\all\ x \in E,\ \bra{x}{Ax} = 0.
\end{equation}
If $x \in E$, we write $\wh x$ for the canonical image of $x$ in $E\dbs$, that is to say $x \in E$ and $x^* \in E^* \lr \bra{x^*}{\wh x} = \bra{x}{x^*}$.
\end{definition}
\par
We recall that if $X$ and $Y$ are Banach spaces and $A\colon\ X \to Y$ is linear then the adjoint $A^*\colon\ Y^* \to X^*$ is defined by $\bra{x}{A^*y^*} = \bra{Ax}{y^*}$\quad($x \in X$, $y^* \in Y^*$).
\begin{theorem}\label{Athm}
Let $A\colon\ E \to E^*$ be bounded, skew and linear.   Suppose that $x\dbs \in E\dbs$, $\lambda > 0$ and $A^* x\dbs \in \overline{R(A + \lambda J)}$.   Then
\begin{equation}\label{A2}
-\bra{A^* x\dbs}{x\dbs} \le \fourth\lambda\|x\dbs\|^2.
\end{equation}
\end{theorem}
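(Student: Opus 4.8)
The plan is to unwind the hypothesis $A^*x\dbs\in\overline{R(A+\lambda J)}$ by choosing sequences $(x_n)$ in $E$ and $x_n^*\in Jx_n$ with $z_n:=Ax_n+\lambda x_n^*\to A^*x\dbs$ in $E^*$ (such sequences exist since the closure in question is the norm closure of $R(A+\lambda J)$ in the Banach space $E^*$), and then to read off \eqref{A2} by following the two scalar sequences $\bra{z_n}{\wh x_n}$ and $\bra{z_n}{x\dbs}$ as $n\to\infty$.

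First I would use skewness to pin down $\bra{z_n}{\wh x_n}$. Since $x_n^*\in Jx_n$ we have $\bra{x_n}{x_n^*}=\|x_n\|^2$ and $\|x_n^*\|=\|x_n\|$, and \eqref{SKEW2} gives $\bra{x_n}{Ax_n}=0$; hence $\bra{z_n}{\wh x_n}=\bra{x_n}{Ax_n}+\lambda\bra{x_n}{x_n^*}=\lambda\|x_n\|^2$. Feeding this into $|\bra{z_n}{\wh x_n}|\le\|z_n\|\,\|x_n\|$ and using $\lambda>0$ yields $\lambda\|x_n\|\le\|z_n\|$, so $(x_n)$ is bounded (the $z_n$ converge, hence form a bounded set in $E^*$). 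This boundedness is the one step that needs a moment's thought; everything afterwards is routine.

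Next, from the definition of the adjoint ($\bra{Ax_n}{x\dbs}=\bra{x_n}{A^*x\dbs}$) and of $\wh x_n$, for every $n$
\begin{align*}
\bra{z_n}{x\dbs}&=\bra{A^*x\dbs}{\wh x_n}+\lambda\bra{x_n^*}{x\dbs}\\
&=\bra{z_n}{\wh x_n}+\bra{A^*x\dbs-z_n}{\wh x_n}+\lambda\bra{x_n^*}{x\dbs}.
\end{align*}
Since $(x_n)$ is bounded and $z_n\to A^*x\dbs$, the middle term $\bra{A^*x\dbs-z_n}{\wh x_n}\to0$, while $\bra{z_n}{x\dbs}\to\bra{A^*x\dbs}{x\dbs}$; combining with the first step, $\bra{A^*x\dbs}{x\dbs}=\lim_n\lambda\big(\|x_n\|^2+\bra{x_n^*}{x\dbs}\big)$. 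Finally, $\bra{x_n^*}{x\dbs}\ge-\|x_n^*\|\,\|x\dbs\|=-\|x_n\|\,\|x\dbs\|$, so
\begin{align*}
\|x_n\|^2+\bra{x_n^*}{x\dbs}&\ge\|x_n\|^2-\|x_n\|\,\|x\dbs\|\\
&=\big(\|x_n\|-\half\|x\dbs\|\big)^2-\fourth\|x\dbs\|^2\ge-\fourth\|x\dbs\|^2,
\end{align*}
and passing to the limit gives $\bra{A^*x\dbs}{x\dbs}\ge-\fourth\lambda\|x\dbs\|^2$, which is \eqref{A2}. The one real obstacle is the boundedness of $(x_n)$; once that is secured, the constant $\fourth$ is simply the completion of the square in $\|x_n\|^2-\|x_n\|\,\|x\dbs\|$.
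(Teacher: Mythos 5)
Your proof is correct and is essentially the paper's argument: both unwind the closure hypothesis into approximate elements $Ax+\lambda x^*+(\hbox{small})$, use skewness and the adjoint identity to reduce $-\bra{A^*x\dbs}{x\dbs}$ to $-\lambda\|x\|^2+\lambda\|x\dbs\|\,\|x\|$ plus an error, and complete the square to extract the constant $\fourth$. The only organizational difference is that you handle the error term by first proving $(x_n)$ is bounded (via $\lambda\|x_n\|^2=\bra{x_n}{z_n}\le\|z_n\|\,\|x_n\|$, a correct and tidy observation), whereas the paper avoids needing boundedness altogether by absorbing the $\eps\|x\|$ term into the quadratic through the shifted constant $Z_\eps=\|x\dbs\|+\eps/\lambda$.
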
    
\begin{proof}
Let $\eps > 0$.  By hypothesis, there exist $x \in E$, $x^* \in E^*$ with
\begin{equation}\label{A3}
\|x^*\| =  \|x\|,\ \bra{x}{x^*} = \|x\|^2,
\end{equation} 
and $z^* \in E^*$ such that $\|z^*\| < \eps$ and $A^* x\dbs = Ax + \lambda x^* + z^*$.  Then   
\begin{equation}\label{A6}
A^* x\dbs - Ax = \lambda x^* + z^*,
\end{equation} 
and, using \eqref{A3},
\begin{equation}\label{A4}
-\bra{x^*}{x\dbs} \le \|x^*\|\|x\dbs\| = \|x\dbs\|\|x\|.
\end{equation}
Let $Z_\eps := \|x\dbs\| + \eps/\lambda$.  From the definition of $A^*$, \eqref{SKEW2}, \eqref{A3}--\eqref{A4}, and the inequalities $\|z^*\| < \eps$ and  $-\|x\|^2 + Z_\eps\|x\| \le \fourth Z_\eps ^2$,  
\begin{align*}
-\bra{A^* x\dbs}{x\dbs}
&= -\bra{x}{A^* x\dbs - Ax} - \bra{A^* x\dbs - Ax}{x\dbs}\\
&= -\bra{x}{\lambda x^* + z^*} - \bra{\lambda x^* + z^*}{x\dbs}\\
&\le -\lambda\|x\|^2 + \eps\|x\| + \lambda \|x\dbs\|\|x\| + \eps\|x\dbs\|\\
&= -\lambda\|x\|^2 + \lambda Z_\eps\|x\| + \eps\|x\dbs\|\le \fourth\lambda Z_\eps^2 + \eps\|x\dbs\|.
\end{align*}
Since $Z_\eps \to \|x\dbs\|$ as $\eps \to 0$, \eqref{A2} now follows by letting $\eps \to 0$.
\end{proof}
\section{Gossez's skew linear operator}\label{GOSSsec}
\begin{definition}\label{Hdef}
In the interest of precision, we shall use three different notations for the three duality pairings that appear in the rest of this paper.   Then (noting that  $\ell_1^* = \ell_\infty$), the bilinear form $\bra{\cdot}{\cdot}_0\colon\ \ell_1 \times \ell_1^* \to \RR$ is\break defined in the usual way.      Then $\ell_1\dbs = \ell_\infty^*$, but this space does not have a convenient sequential representation.   In this connection, see  \Rem{BETArem}.   Also, $\bra{\cdot}{\cdot}_1\colon\ \ell_1^* \times \ell_1\dbs \to \RR$ and $\bra{\cdot}{\cdot}_2\colon\ \ell_1\dbs \times \ell_1\trs \to \RR$.   We write $\|\cdot\|_1$ and $\|\cdot\|_\infty$ for the usual norms on $\ell_1$ and $\ell_\infty$.  Let $c$ be the subspace of $\ell_\infty$ consisting of all {\em convergent} sequences.   Finally, let $e^* := (1,1,\dots) \in c$.   In what follows, all sequences are indexed by the set $\{1,2,3,\dots\}$.
\par
Define the linear operator $G\colon\ \ell_1 \to \ell_1^* = \ell_\infty$ by
\begin{equation}\label{GDEF}
\all\ x \in \ell_1,\ (Gx)_m = \ts\sum_{n > m} x_n - \sum_{n < m}x_n.
\end{equation}
$G$ is the ``Gossez operator''.  It is well known that $G$ is skew and maximally monotone.  See \cite[Example, p.\ 89]{GOSSEZRANGE}.    Clearly, for all $x \in \ell_1$,
\begin{equation}\label{H1}
\ts\lim_{m \to \infty}(Gx)_m = - \sum_{n = 1}^\infty x_n = -\bra{x}{e^*}_0.
\end{equation}
\end{definition}
\begin{lemma}\label{GTlem}
There exists $x_0\dbs \in \ell_1\dbs = \ell_\infty^*$ such that
\begin{equation}\label{GT0}
\|x_0\dbs\| = 1,
\end{equation}
\begin{equation}\label{GT1}
G^* x_0\dbs = - e^* \in \ell_1^* =  \ell_\infty
\end{equation}
and
\begin{equation}\label{GT2}
\bra{G^* x_0\dbs}{x_0\dbs}_1 = - 1.
\end{equation}
\end{lemma}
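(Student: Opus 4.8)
The plan is to construct $x_0\dbs$ as a suitable limit of the canonical images $\wh{e_m}$ of the standard unit vectors $e_m\in\ell_1$, where $e_m$ has a $1$ in position $m$ and zeros elsewhere. First I would observe that each $\wh{e_m}$ lies in the unit sphere of $\ell_1\dbs$, so by weak$^*$-compactness of the ball (Banach--Alaoglu) the net $(\wh{e_m})$ has a weak$^*$-cluster point $x_0\dbs\in\ell_1\dbs$ with $\|x_0\dbs\|\le 1$. The candidate is any such cluster point; concretely one can take $x_0\dbs$ to be the image under the canonical embedding $\ell_\infty^*\hookleftarrow$ of a Banach limit, i.e.\ a norm-one element of $\ell_\infty^*$ extending the limit functional on $c$, but the weak$^*$-cluster-point description is cleaner for what follows.

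Next I would verify \eqref{GT1}. For any $x^*=u\in\ell_\infty=\ell_1^*$ we have $\bra{u}{\wh{e_m}}_1=\bra{e_m}{u}_0=u_m$, so testing $G^*x_0\dbs$ against an arbitrary $x\in\ell_1$ gives $\bra{x}{G^*x_0\dbs}_1 = \bra{Gx}{x_0\dbs}_1$, and since $x_0\dbs$ is a weak$^*$-cluster point of $(\wh{e_m})$ this equals a cluster value of $(Gx)_m$; but by \eqref{H1} the whole sequence $(Gx)_m$ converges to $-\bra{x}{e^*}_0$, so every cluster value is $-\bra{x}{e^*}_0$. Hence $\bra{x}{G^*x_0\dbs}_1=-\bra{x}{e^*}_0$ for all $x\in\ell_1$, which is exactly $G^*x_0\dbs=-e^*$. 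Then \eqref{GT2} is immediate: $\bra{G^*x_0\dbs}{x_0\dbs}_1=\bra{-e^*}{x_0\dbs}_1$, and since $e^*=(1,1,\dots)$ and $e^*_m=\bra{e_m}{e^*}_0$, the same cluster-point argument (now the sequence $(1,1,\dots)$ trivially converges to $1$) gives $\bra{e^*}{x_0\dbs}_1=1$, so $\bra{G^*x_0\dbs}{x_0\dbs}_1=-1$. Finally \eqref{GT0}: we already have $\|x_0\dbs\|\le 1$, and $\|x_0\dbs\|\ge |\bra{e^*}{x_0\dbs}_1|/\|e^*\|_\infty = 1$, so $\|x_0\dbs\|=1$.

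The one point that needs a little care — and which I expect to be the main (minor) obstacle — is the passage from "weak$^*$-cluster point of the net" to "the cluster value agrees with the genuine limit of the scalar sequence." This is where one uses that for a convergent scalar sequence $(a_m)$, every subnet converges to the same limit, so any cluster point $x_0\dbs$ of $(\wh{e_m})$ satisfies $\bra{u}{x_0\dbs}_1=\lim_m u_m$ whenever $u=(u_m)\in c$; applied to $u=Gx$ (legitimate by \eqref{H1}) and to $u=e^*$ this closes everything. An equivalent and perhaps more transparent route is simply to fix a Banach limit $L$ on $\ell_\infty$, set $x_0\dbs:=L$ viewed in $\ell_\infty^*=\ell_1\dbs$, and use the defining properties $L(u)=\lim u$ on $c$, $\|L\|=1$, and $L(e^*)=1$ directly; no compactness argument is then needed. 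I would present whichever is shorter, noting that the reader who prefers Banach limits can substitute that construction verbatim.
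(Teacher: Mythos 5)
Your proof is correct and is essentially the paper's argument: the paper obtains $x_0\dbs$ directly as a norm-preserving Hahn--Banach extension of the functional $x^* \mapsto \lim_{m\to\infty} x_m^*$ from $c$ to $\ell_\infty$ --- precisely the ``Banach limit'' alternative you offer at the end --- and then verifies \eqref{GT1} and \eqref{GT2} exactly as you do, using that $Gx \in c$ together with \eqref{H1}. Your weak$^*$-cluster-point construction of $x_0\dbs$ from the net $(\wh{e_m})$ is a valid, slightly more roundabout substitute for that single extension step, and your care about cluster values of convergent scalar sequences is the right point to watch.
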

\begin{proof}
The map from $c$ into $\RR$ defined by $x^* \mapsto \lim_{m \to \infty} x_m^*$ is bounded and linear and has norm $1$ on the vector subspace $c$ of $\ell_\infty$.   So, from the extension form of the Hahn--Banach theorem, there exists $x_0\dbs \in \ell_\infty^*$ such that \eqref{GT0} is satisfied and,
\begin{equation}\label{R1}
\ts\all\ x^* \in c,\ \bra{x^*}{x_0\dbs}_1 = \lim_{m \to \infty} x_m^*.
\end{equation}        
For all $x \in \ell_1$, $Gx \in c$.   Thus, from  \eqref{H1},
\begin{equation}\label{GT3}
\bra{x}{G^* x_0\dbs}_0 = \bra{Gx}{x_0\dbs}_1 = \ts\lim_{m \to \infty}(Gx)_m = - \bra{x}{e^*}_0.
\end{equation}
This completes the proof of \eqref{GT1}.   From this, $\bra{G^* x_0\dbs}{x_0\dbs}_1 = \bra{- e^*}{x_0\dbs}_1$ and
\eqref{GT2} is immediate from \eqref{R1}.
\end{proof}
\par
The proof of \Lem{PROPlem} below is based on that of \cite[Proposition, p.\ 360]{GOSSEZCONV}.   However, instead of using measure theory on $\beta \NN$, we use the fact that a linear subspace is closed under differences (in \eqref{PROP5} and \eqref{PROP6}) and sums (in \eqref{PROP7}).   There is another way of establishing \Lem{PROPlem}, using {\em Rugged Banach spaces}.   See \cite[Proposition 15.3.8, p.\ 176]{HEINZ}. 
\begin{lemma}\label{PROPlem}
Let $\lambda > 0$.   Suppose that
\begin{equation}\label{PROP1}
\overline{R(G + \lambda J)}\hbox{ is convex.}
\end{equation}
Then
\begin{equation}\label{PROP2}
\overline{R(G + \lambda J)} = \ell_\infty.
\end{equation}
\end{lemma}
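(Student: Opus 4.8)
The plan is to show that, once $\overline{R(G + \lambda J)}$ is assumed convex, it is in fact a closed \emph{linear subspace} of $\ell_\infty$, and then to prove directly that this subspace is all of $\ell_\infty$ by producing, for each coordinate $k$, every sequence that vanishes in its $k$-th coordinate.

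First I would record two elementary homogeneity properties of the duality map, read straight off \eqref{A3}: $J$ is positively homogeneous of degree one ($x^* \in Jx \lr sx^* \in J(sx)$ for $s > 0$) and odd ($x^* \in Jx \lr -x^* \in J(-x)$). Since $G$ is linear, it follows that $R(G + \lambda J)$ contains $0$, is a cone, and satisfies $-R(G + \lambda J) = R(G + \lambda J)$. Passing to the closure preserves all of this, so $C := \overline{R(G + \lambda J)}$ is a closed symmetric cone; being convex by \eqref{PROP1}, it is closed under addition (a convex cone is: $\half a + \half b \in C$, hence $a + b = 2(\half a + \half b) \in C$), and then, using symmetry, closed under all real scalar multiples. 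Thus $C$ is a closed linear subspace, and it remains only to prove $C = \ell_\infty$.

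Next I would compute the duality map at a unit vector $e_k$. Since $\|e_k\|_1 = 1$, one has $x^* \in J(e_k)$ exactly when $x^*_k = 1$ and $\|x^*\|_\infty \le 1$; equivalently $J(e_k) = e_k + \{u \in \ell_\infty : u_k = 0,\ \|u\|_\infty \le 1\}$. Every $Ge_k + \lambda x^*$ with $x^* \in J(e_k)$ lies in $R(G + \lambda J) \subseteq C$, so for any two such choices $x^*, x^{*\prime}$ the difference $\lambda(x^* - x^{*\prime})$ lies in the subspace $C$. As $x^*, x^{*\prime}$ run over $J(e_k)$, these differences run over all $u \in \ell_\infty$ with $u_k = 0$ and $\|u\|_\infty \le 2$, a set whose linear span is $V_k := \{u \in \ell_\infty : u_k = 0\}$; since $C$ is a subspace, $V_k \subseteq C$. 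Finally, every $y \in \ell_\infty$ decomposes as $y = (y - y_1 e_1) + y_1 e_1$ with $y - y_1 e_1 \in V_1$ and $y_1 e_1 \in V_2$, so $\ell_\infty = V_1 + V_2 \subseteq C$, which is \eqref{PROP2}.

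The only place the convexity hypothesis enters is in upgrading $\overline{R(G + \lambda J)}$ from a symmetric cone to a genuine subspace; after that the argument is a short computation that uses neither the skewness of $G$ nor its explicit formula, only that $G$ is linear and $Ge_k \in \ell_\infty$. I expect the one step needing a little care to be the claim that the differences $x^* - x^{*\prime}$, with $x^*, x^{*\prime} \in J(e_k)$, have linear span exactly $V_k$ — this is precisely where the non-uniqueness of the duality map at $e_k$ (all coordinates but the $k$-th are unconstrained) does the real work. This route replaces the measure theory on $\beta\NN$ entirely, using only that a linear subspace is closed under differences and sums.
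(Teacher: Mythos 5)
Your proof is correct, and while it shares the paper's key structural idea, its execution is genuinely cleaner in two respects. Both arguments begin identically: $R(G+\lambda J)$ is a symmetric cone containing $0$ because $G$ is linear and $J$ is positively homogeneous and odd, so under \eqref{PROP1} the closure is a linear subspace; and both then exploit the severe multivaluedness of $J$ at a finitely supported point, taking differences to cancel the $G$-term and the constrained coordinates. The paper, following Gossez, evaluates at $ke_1-ke_2$ (where $J$ forces the first two coordinates to be $\pm 2k$ and leaves the rest free up to $2\lambda k$), subtracts a particular member to get all vectors supported off $\{1,2\}$ with coordinates bounded by $2\lambda k$, does the same at $ke_3-ke_4$, adds, and finally lets $k\to\infty$ to remove the bound. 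You instead evaluate at the single unit vector $e_k$, observe that the differences sweep out a ball of the hyperplane $V_k=\{u: u_k=0\}$, and note that the \emph{linear span} of that ball is already all of $V_k$ --- so the limiting argument in $k$ disappears entirely, replaced by a single scaling. Your decomposition $\ell_\infty=V_1+V_2$ is also more economical than the paper's interleaving of the two coordinate blocks $\{1,2\}$ and $\{3,4\}$. One cosmetic slip: the differences $\lambda(x^*-x^{*\prime})$ have sup norm at most $2\lambda$, not $2$, but this is irrelevant since you only use their span. As you note, neither argument uses skewness or the formula for $G$ beyond $Ge_k\in\ell_\infty$; the convexity hypothesis does all the work in both.
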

\begin{proof}
Let $k \ge 1$.   As observed in \cite[Proposition, p.\ 360]{GOSSEZCONV}, if, for all $m \not\in \{1,2\}$, $|u_m^*| \le 2\lambda k$, then
\begin{equation}\label{PROP3}
(-k + 2\lambda k,-k - 2\lambda k,u_3^*,u_4^*,u_5^*,\dots) \in (G + \lambda J)(ke_1 - ke_2) \in \overline{R(G + \lambda J)}.
\end{equation}   
In particular,
\begin{equation}\label{PROP4}
(-k + 2\lambda k,-k - 2\lambda k,0,0,0,\dots) \in \overline{R(G + \lambda J)}.
\end{equation}   
As observed in \cite[Proposition, p.\ 360]{GOSSEZCONV}, \eqref{PROP1} implies that $\overline{R(G + \lambda J)}$ is a linear subspace of $\ell_\infty$.   So, by subtracting \eqref{PROP4} from \eqref{PROP3},
\begin{equation}\label{PROP5}
(0,0,u_3^*,u_4^*,u_5^*,\dots) \in \overline{R(G + \lambda J)}.
\end{equation}   
Similarly, if, for all $m \not\in \{3,4\}$, $|v_m^*| \le 2\lambda k$, then
\begin{equation}\label{PROP6}
(v_1^*,v_2^*,0,0,v_5^*,\dots) \in \overline{R(G + \lambda J)}.
\end{equation}   
Taking the Minkowski sum of \eqref{PROP5} and \eqref{PROP6},
\begin{equation}\label{PROP7}
(v_1^*,v_2^*,u_3^*,u_4^*,u_5^* + v_5^*,u_6^* + v_6^*,\dots) \in \overline{R(G + \lambda J)}.
\end{equation}   
\eqref{PROP2} now follows easily by letting $k \to \infty$.
\end{proof}
\begin{lemma}\label{AGthm}
Suppose that $\lambda > 0$ and $\overline{R(G + \lambda J)}$ is convex.  Then $\lambda \ge 4$.
\end{lemma}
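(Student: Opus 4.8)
The plan is to combine the two structural results already proved: Lemma~\ref{PROPlem} (convexity of the closure forces it to be all of $\ell_\infty$) and Theorem~\ref{Athm} (the quadratic-form bound for skew operators). First I would invoke Lemma~\ref{PROPlem}: if $\overline{R(G + \lambda J)}$ is convex, then it equals $\ell_\infty = \ell_1^*$. In particular, $G^* x_0\dbs \in \overline{R(G + \lambda J)}$, where $x_0\dbs \in \ell_1\dbs$ is the functional produced in Lemma~\ref{GTlem}. This is exactly the hypothesis needed to apply Theorem~\ref{Athm} with $E = \ell_1$, $A = G$, and $x\dbs = x_0\dbs$.

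Applying \eqref{A2} then gives
\begin{equation*}
-\bra{G^* x_0\dbs}{x_0\dbs}_1 \le \fourth\lambda\|x_0\dbs\|^2.
\end{equation*}
Now I substitute the two values computed in Lemma~\ref{GTlem}: by \eqref{GT2} the left-hand side is $-(-1) = 1$, and by \eqref{GT0} we have $\|x_0\dbs\|^2 = 1$. Hence $1 \le \fourth\lambda$, i.e.\ $\lambda \ge 4$, which is the claim.

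The argument is essentially a two-line deduction once the machinery is in place, so there is no real obstacle; the only point requiring a moment's care is the bookkeeping between the three duality pairings $\bra{\cdot}{\cdot}_0$, $\bra{\cdot}{\cdot}_1$, and the canonical embedding — one must check that the quantity $\bra{A^* x\dbs}{x\dbs}$ appearing in Theorem~\ref{Athm} is literally the quantity $\bra{G^* x_0\dbs}{x_0\dbs}_1$ evaluated in Lemma~\ref{GTlem}, with $G^* x_0\dbs \in \ell_1^* = \ell_\infty$ paired against $x_0\dbs \in \ell_1\dbs = \ell_\infty^*$. This is immediate from the conventions of Definition~\ref{Hdef}. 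I would also note in passing that the hypothesis $A$ bounded in Theorem~\ref{Athm} is satisfied since $G$ is a bounded operator (indeed $\|Gx\|_\infty \le \|x\|_1$).
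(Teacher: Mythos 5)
Your argument is correct and is exactly the paper's proof: invoke Lemma~\ref{PROPlem} to get $G^* x_0\dbs \in \overline{R(G + \lambda J)}$, then apply Theorem~\ref{Athm} together with \eqref{GT0} and \eqref{GT2} to obtain $1 \le \fourth\lambda$. Your added remarks on the pairing conventions and the boundedness of $G$ are accurate but not needed beyond what the paper states.
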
    
\begin{proof}
Let $x_0\dbs$ be as in \Lem{GTlem}.   From \Lem{PROPlem}, $G^* x_0\dbs \in \overline{R(G + \lambda J)}$.  From \Thm{Athm}, \eqref{GT2} and \eqref{GT0}, $1 = -\bra{G^* x_0\dbs}{x_0\dbs} \le \fourth\lambda\|x_0\dbs\|^2 = \fourth\lambda$. This gives the desired result.
\end{proof}
\begin{theorem}\label{PATHOthm}
If $0 < \lambda < 4$ then $\overline{R(G + \lambda J)}$ is not convex.    In particular, $\overline{R(G + J)}$ is not convex.
\end{theorem}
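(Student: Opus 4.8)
The plan is to read off \Thm{PATHOthm} as the contrapositive of \Lem{AGthm}.   Suppose that $0 < \lambda < 4$ and, toward a contradiction, that $\overline{R(G + \lambda J)}$ is convex.   Then \Lem{AGthm} applies verbatim and yields $\lambda \ge 4$, contradicting $\lambda < 4$.   Hence $\overline{R(G + \lambda J)}$ fails to be convex for every $\lambda \in (0,4)$.   The ``in particular'' assertion is the special case $\lambda = 1$, since $1 \in (0,4)$.

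Since \Lem{AGthm} is already in hand, nothing further is needed here; the substance of the argument lives in the earlier results, and the only genuinely load-bearing step is the reduction carried out inside \Lem{AGthm}.   That reduction combines two facts.   First, \Lem{PROPlem} promotes the hypothetical convexity of $\overline{R(G + \lambda J)}$ to the equality $\overline{R(G + \lambda J)} = \ell_\infty$; in particular the element $G^* x_0\dbs = -e^*$ produced in \Lem{GTlem} lies in $\overline{R(G + \lambda J)}$.   Second, with the bounded skew linear operator $A := G$, the hypotheses of \Thm{Athm} are now met with $x\dbs := x_0\dbs$, so \eqref{A2} gives $-\bra{G^* x_0\dbs}{x_0\dbs} \le \fourth\lambda\|x_0\dbs\|^2$.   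Inserting $\|x_0\dbs\| = 1$ from \eqref{GT0} and $\bra{G^* x_0\dbs}{x_0\dbs}_1 = -1$ from \eqref{GT2} forces $1 \le \fourth\lambda$, that is, $\lambda \ge 4$.

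If one preferred to argue \Thm{PATHOthm} directly, bypassing \Lem{AGthm}, the only step needing care would be verifying the hypothesis $A^* x\dbs \in \overline{R(A + \lambda J)}$ of \Thm{Athm}; it is precisely this containment --- obtained for free once \Lem{PROPlem} converts convexity into surjectivity onto $\ell_\infty$ --- that I expect to be the crux.   Everything else is the bookkeeping already completed in \Lem{GTlem}, \Lem{PROPlem} and \Thm{Athm}.
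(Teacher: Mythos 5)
Your proof is correct and is exactly the paper's: \Thm{PATHOthm} is the immediate contrapositive of \Lem{AGthm}, with the ``in particular'' clause being the case $\lambda = 1$. Your accompanying summary of the reduction inside \Lem{AGthm} (convexity $\Rightarrow$ surjectivity via \Lem{PROPlem}, then \Thm{Athm} applied to $x_0\dbs$ from \Lem{GTlem}) also matches the paper faithfully.
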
 
\begin{proof}
This is immediate from \Lem{AGthm}.
\end{proof}
\begin{problem}
Is $\overline{R(G + 4J)}$ convex?
\end{problem}
\section{On the dual, bidual and tridual of $\ell_1$}\label{TRIsec}
This section is devoted to the technical results that will be needed for our discussion of $G^*$ in \Sec{FINERsec}.   We point, in particular, to \Lem{Wlem}(c), in which $p^*$ is moved from being the first variable in $\bra{\cdot}{\cdot}_1$ to being the second variable in $\bra{\cdot}{\cdot}_0$, {\em i.e.}, from being a primal variable to being a dual variable.   \Lem{Wlem}(c) will be critical in the proof of \eqref{W3}, which will be used in \Thm{TRZthm}.
\par
Let $c_0$ be the Banach space of sequences that converge to $0$.   For all $m \ge 1$, let $e_m^*$ be the element $(0,\dots, 0,1,0,0,\dots)$ of $\ell_1^*$, with the $1$ in the $m$th place.   Define the linear map $W\colon\ \ell_1\dbs \to \ell_1$ by $Wx\dbs := \big(\bra{e_m^*}{x\dbs}_1\big)_{m \ge 1}$.    
\begin{lemma}\label{Wlem}
\par\noindent
{\rm(a)}\enspace Let $x\dbs \in \ell_1\dbs$.   Then $\sum_{m = 1}^\infty|\bra{e_m^*}{x\dbs}_1| \le \|x\dbs\| < \infty$.
\par\noindent
{\rm(b)}\enspace $\|W\| = 1$ and, for all $x \in \ell_1$, $W{\wh x} = x$.
\par\noindent
{\rm(c)}\enspace Let $p^* \in c_0 \subset \ell_1^*$ and $x\dbs \in \ell_1\dbs$.   Then $\bra{p^*}{x\dbs}_1 = \bra{Wx\dbs}{p^*}_0$.
\end{lemma}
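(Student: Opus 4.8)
The plan is to establish the three parts in order, since (a) is exactly what guarantees that $W$ takes values in $\ell_1$, (b) follows almost at once, and (c) is where the one genuine idea enters.

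For (a), fix $x\dbs \in \ell_1\dbs$ and $N \ge 1$, and choose $\eps_1,\dots,\eps_N \in \{-1,1\}$ with $\eps_m\bra{e_m^*}{x\dbs}_1 = |\bra{e_m^*}{x\dbs}_1|$ for $1 \le m \le N$. Then $u^* := \sum_{m=1}^N \eps_m e_m^*$ lies in $\ell_1^* = \ell_\infty$ with $\|u^*\|_\infty \le 1$, so that
\[
\ts\sum_{m=1}^N|\bra{e_m^*}{x\dbs}_1| = \bra{u^*}{x\dbs}_1 \le \|u^*\|_\infty\|x\dbs\| \le \|x\dbs\|.
\]
Letting $N \to \infty$ gives the asserted bound, and in particular $Wx\dbs \in \ell_1$ with $\|Wx\dbs\|_1 \le \|x\dbs\|$.

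For (b), the inequality just obtained says $\|W\| \le 1$. For $x \in \ell_1$ and $m \ge 1$, the defining property of the canonical image in \Def{SKEWdef} gives $\bra{e_m^*}{\wh x}_1 = \bra{x}{e_m^*}_0 = x_m$, whence $W\wh x = x$; taking any nonzero $x \in \ell_1$ and using $\|\wh x\| = \|x\|_1$ shows $\|W\| \ge 1$, so $\|W\| = 1$.

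For (c), fix $x\dbs \in \ell_1\dbs$ and compare the two maps $p^* \mapsto \bra{p^*}{x\dbs}_1$ and $p^* \mapsto \bra{Wx\dbs}{p^*}_0$ on $\ell_1^* = \ell_\infty$. Both are bounded and linear --- the second because $Wx\dbs \in \ell_1$ by (a) --- hence their restrictions to $c_0$ are continuous, and on each $e_m^*$ they both return the $m$th coordinate $(Wx\dbs)_m$, by the definition of $W$ on the one side and by inspection on the other. By linearity they agree on all finitely supported sequences, which are dense in $c_0$, so continuity forces agreement on all of $c_0$; this is precisely (c). The one point to watch is that this argument genuinely needs $p^* \in c_0$: the identity fails for general $p^* \in \ell_\infty$, since it relies on $p^*$ lying in the closed span of $\{e_m^*\}_{m\ge 1}$, which is $c_0$ rather than all of $\ell_\infty$. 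Parts (a) and (b) are otherwise entirely routine.
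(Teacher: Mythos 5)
Your proposal is correct and follows essentially the same route as the paper: the signed sum $\sum_m \eps_m e_m^*$ for (a), the coordinate computation $\bigl(W\wh x\bigr)_m = \bra{x}{e_m^*}_0 = x_m$ for (b), and for (c) the fact that every $p^* \in c_0$ is the $\ell_\infty$-limit of its finitely supported truncations. Your phrasing of (c) as two continuous linear functionals agreeing on a dense subspace is just a repackaging of the paper's direct limit computation, and your closing remark about why the identity cannot extend to all of $\ell_\infty$ is accurate.
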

\begin{proof}
For all $m \ge 1$, find $\delta_m$ such that $|\delta_m| = 1$ and $\delta_m\bra{e_m^*}{x\dbs}_1 = |\bra{e_m^*}{x\dbs}_1|$.   Let $n \ge 1$.   Then
\begin{align*}
\ts\sum_{m = 1}^n|\bra{e_m^*}{x\dbs}_1|
&= \ts\sum_{m = 1}^n\delta_m\bra{e_m^*}{x\dbs}_1
= \ts\Bra{\sum_{m = 1}^n\delta_me_m^*}{x\dbs}_1\\
&\le\ts\big\|\sum_{m = 1}^n\delta_me_m^*\big\|_\infty\|x\dbs\| = \ts\sup_{m = 1}^n|\delta_m|\|x\dbs\| = \|x\dbs\|.
\end{align*}
(a) now follows by letting $n \to \infty$.  It also follows that $\|Wx\dbs\|_1 \le \|x\dbs\|$.   Since this holds for all $x\dbs \in \ell_1\dbs$, $\|W\| \le 1$.   Now let $x \in \ell_1$.   Then, for all $m \ge 1$, $(W{\wh x})_m = \Bra{e_m^*}{{\wh x}}_1 = \bra{x}{e_m^*}_0 = x_m$, and so $W{\wh x} = x$, as required.   It follows from this that $\|W\| = 1$, which completes the proof of (b).
\par
Let $p^* = (p_m)_{m \ge 1} \in c_0$.   Since $p^* = \lim_{n \to \infty}\sum_{m = 1}^np_me_m^*$ in $\ell_\infty$,
\begin{align*}
\ts\bra{p^*}{x\dbs}_1
&= \ts\lim_{n \to \infty}\sum_{m = 1}^np_m\bra{e_m^*}{x\dbs}_1
= \ts\sum_{m = 1}^\infty p_m\bra{e_m^*}{x\dbs}_1\\
&= \ts\sum_{m = 1}^\infty\bra{e_m^*}{x\dbs}_1p_m = \bra{Wx\dbs}{p^*}_0.
\end{align*}
This completes the proof of (c).
\end{proof}
In what follows, we define $w\trs := \wh{e^*} - W^*e^* \in \ell_1\trs$.
\begin{theorem}\label{Wthm}
We have
\begin{equation}\label{W1}
\|w\trs\| = 1,
\end{equation}
for all $x \in \ell_1$,
\begin{equation}\label{W2}
\bra{\wh x}{w\trs}_2 = 0,
\end{equation}
and, for all $x^* = (x_m^*)_{m \ge 1} \in c$ and $x\dbs \in \ell_1\dbs$,
\begin{equation}\label{W3}
\ts\bra{x^*}{x\dbs}_1 = \bra{Wx\dbs}{x^*}_0 + \bra{x\dbs}{w\trs}_2\lim_{n \to \infty}x_n^*.
\end{equation}
\end{theorem}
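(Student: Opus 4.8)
The plan is to reduce all three assertions to one master identity: for every $x\dbs \in \ell_1\dbs$,
\[
\bra{x\dbs}{w\trs}_2 = \bra{e^*}{x\dbs}_1 - \bra{Wx\dbs}{e^*}_0 = \bra{e^*}{x\dbs}_1 - \sum_{m=1}^\infty\bra{e_m^*}{x\dbs}_1,
\]
the series converging absolutely by \Lem{Wlem}(a). This identity is immediate from the definition $w\trs = \wh{e^*} - W^*e^*$: the term $\bra{x\dbs}{\wh{e^*}}_2$ equals $\bra{e^*}{x\dbs}_1$ by the defining property of the canonical embedding $\wh{\cdot}$ (applied with $E = \ell_\infty$), and $\bra{x\dbs}{W^*e^*}_2$ equals $\bra{Wx\dbs}{e^*}_0$ by the definition of the adjoint $W^*$, while $\bra{Wx\dbs}{e^*}_0 = \sum_m (Wx\dbs)_m = \sum_m\bra{e_m^*}{x\dbs}_1$ since $Wx\dbs \in \ell_1$ and $e^* = (1,1,\dots)$.

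Granting the master identity, \eqref{W2} is the shortest step: apply it with $x\dbs = \wh x$ and use $W\wh x = x$ from \Lem{Wlem}(b); both terms then equal $\bra{x}{e^*}_0$, so their difference vanishes. For \eqref{W1}, the lower bound $\|w\trs\| \ge 1$ I would get from the functional $x_0\dbs$ of \Lem{GTlem}: since $e^* \in c$ has limit $1$ and each $e_m^* \in c_0$ has limit $0$, \eqref{R1} gives $\bra{e^*}{x_0\dbs}_1 = 1$ and $Wx_0\dbs = 0$, so the master identity yields $\bra{x_0\dbs}{w\trs}_2 = 1$ with $\|x_0\dbs\| = 1$. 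The upper bound $\|w\trs\| \le 1$ is the one place a little care is needed, because applying the triangle inequality naively to $\wh{e^*} - W^*e^*$ only gives the bound $2$; instead I would note that for each $N \ge 1$ the sequence $e^* - \sum_{m=1}^N e_m^*$ lies in $\ell_\infty$ with norm $1$, so $\big|\bra{e^*}{x\dbs}_1 - \sum_{m=1}^N\bra{e_m^*}{x\dbs}_1\big| \le \|x\dbs\|$, and then let $N \to \infty$ using the absolute convergence of the series; by the master identity this says $|\bra{x\dbs}{w\trs}_2| \le \|x\dbs\|$ for all $x\dbs$, i.e.\ $\|w\trs\| \le 1$.

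Finally, for \eqref{W3}, given $x^* = (x_m^*) \in c$ with $L := \lim_n x_n^*$, the sequence $x^* - Le^*$ belongs to $c_0$, so \Lem{Wlem}(c) applies and gives $\bra{x^* - Le^*}{x\dbs}_1 = \bra{Wx\dbs}{x^* - Le^*}_0$; expanding by bilinearity, isolating $\bra{x^*}{x\dbs}_1$, and recognizing $L\big(\bra{e^*}{x\dbs}_1 - \bra{Wx\dbs}{e^*}_0\big) = L\,\bra{x\dbs}{w\trs}_2$ via the master identity produces exactly \eqref{W3}. The only genuine obstacle in the whole argument is the sharp estimate $\|w\trs\| \le 1$ in \eqref{W1}; the remaining steps are routine manipulations with the three duality pairings.
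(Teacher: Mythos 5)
Your proposal is correct and follows essentially the same route as the paper: your ``master identity'' is exactly the paper's display \eqref{W4}, the norm-one estimate via $\big\|e^* - \sum_{m=1}^N e_m^*\big\|_\infty = 1$ and the lower bound via $x_0\dbs$ are the paper's argument for \eqref{W1}, and \eqref{W2} and \eqref{W3} are derived identically from \Lem{Wlem}(b) and (c).
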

\begin{proof}
\par
Let $n \ge 1$.   Then
\begin{align*}
\ts\bra{e^*}{x\dbs}_1 - \sum_{m = 1}^n\bra{e_m^*}{x\dbs}_1
&= \ts\Bra{e^* - \sum_{m = 1}^ne_m^*}{x\dbs}_1\\
&\le \ts\big\|e^* - \sum_{m = 1}^ne_m^*\big\|_\infty\|x\dbs\| = \|x\dbs\|.
\end{align*}
Letting $n \to \infty$, $\bra{e^*}{x\dbs}_1 - \sum_{m = 1}^\infty\bra{e_m^*}{x\dbs}_1 \le \|x\dbs\|$.   Thus
\begin{align}\label{W4}
\bra{x\dbs}{w\trs}_2
&= \Bra{x\dbs}{\wh{e^*}}_2 - \Bra{x\dbs}{W^* e^*}_2 = \bra{e^*}{x\dbs}_1 - \bra{Wx\dbs}{e^*}_0\\
&= \ts\bra{e^*}{x\dbs}_1 - \sum_{m = 1}^\infty\bra{e_m^*}{x\dbs}_1 \le \|x\dbs\|.\notag
\end{align}
Since this holds for all $x\dbs \in \ell_1\dbs$, $\|w\trs\| \le 1$.   On the other hand, if $x_0\dbs$ is as in \Lem{GTlem}, then \eqref{GT0} gives $\|x_0\dbs\| = 1$ and, from \eqref{R1} and the above, 
\begin{align*}
\bra{x_0\dbs}{w\trs}_2
&=\ts\bra{e^*}{x_0\dbs}_1 - \sum_{m = 1}^\infty\bra{e_m^*}{x_0\dbs}_1 = 1 - \sum_{m = 1}^\infty0 = 1, 
\end{align*}
which gives \eqref{W1}.   Now let $x \in \ell_1$.   Then, from \Lem{Wlem}(b),
\begin{align*}
\bra{\wh x}{w\trs}_2
&= \Bra{\wh x}{\wh{e^*}}_2 - \Bra{\wh x}{W^* e^*}_2 =  \bra{e^*}{\wh x}_1 - \bra{W\wh x}{e^*}_0\\
&= \bra{x}{e^*}_0 - \bra{x}{e^*}_0 = 0,
\end{align*}
which gives \eqref{W2}.   Finally, let $x^* = (x_m^*)_{m \ge 1} \in c$, $x\dbs \in \ell_1\dbs$, and write $\Lambda := \lim_{n \to \infty}x_n^*$.   From \Lem{Wlem}(c), with $p^* := x^* - \Lambda e^* \in c_0$,
\begin{equation*}
\bra{x^* - \Lambda e^*}{x\dbs}_1 = \bra{Wx\dbs}{x^* - \Lambda e^*}_0.
\end{equation*}
Thus     
\begin{align*}
\bra{x^*}{x\dbs}_1
&= \bra{Wx\dbs}{x^* - \Lambda e^*}_0 + \bra{\Lambda e^*}{x\dbs}_1\\
&= \bra{Wx\dbs}{x^*}_0 - \bra{Wx\dbs}{\Lambda e^*}_0 + \bra{\Lambda e^*}{x\dbs}_1,
\end{align*}
which gives \eqref{W3}.   This completes the proof of \Thm{Wthm}.
\end{proof}
\begin{remark}
For all $x\dbs \in \ell_1\dbs = \ell_\infty^*$, $\bra{x\dbs}{W^* e^*}_2 = \bra{Wx\dbs}{e^*}_0 =\break \sum_{m = 1}^\infty\bra{e_m^*}{x\dbs}_1 = \sum_{m = 1}^\infty\bra{x\dbs}{\wh{e_m^*}}_2$, so we could write $W^* e^* = \sum_{m = 1}^\infty\wh{e_m^*}$, with the understanding that the convergence is in the $w(\ell_1\trs,\ell_1\dbs)$ (weak$^*$) sense.   Thus, with this understanding, $w\trs = \wh{e^*} - \sum_{m = 1}^\infty\wh{e_m^*}$.   Note from \eqref{W1} that this does not imply that $w\trs = 0$. 
\par
Since $\wh{W\wh{x}} = \wh{x}$, the map $x\dbs \mapsto \wh{Wx\dbs}$ is a {\em linear retraction} from $\ell_1\dbs$ onto $\wh{\ell_1}$.
\end{remark}
\begin{remark}\label{BETArem}
It is known from standard results in point--set topology that the set $\NN$ of positive integers (considered as a discrete topological space) can be embedded as a dense open subspace of a compact Hausdorff space, $\beta \NN$ (the {\em Stone-\u Cech compactification} of $\NN$), such that, for all $x^* \in \ell_1^* = {\ell_\infty}$, there exists a unique element $\beta x^*$ of $C(\beta \NN)$ (the set  of continuous functions on $\beta \NN$) extending $x^*$.   (The fact that $\NN$ is open in $\beta\NN$ is a consequence of the result proved in \cite[XI.8.3, pp. 245--246]{DUG} that any locally compact completely regular space is open in any compactification.)    Obviously $\beta e^* = 1$.
\par
For all $m \ge 1$, $\{m\}$ is open (in $\NN$ and hence) in $\beta\NN$, and so, if $f_m\colon \beta\NN \to \RR$ is defined by $f_m(m) := 1$ and $f_m := 0$ on $\beta \NN \setminus \{m\}$ then $f_m = \beta e_m^* \in C(\beta \NN)$.   It follows from the Riesz representation theorem (see, for instance, \cite[Theorem 6.19, pp. 130--132]{RUDIN} for a considerably more general result) that $\ell_1\dbs = \ell_\infty^*$ can be identified with the set $\cal M(\beta \NN)$ of (signed) Radon measures on $\beta \NN$.   If $x\dbs \in \ell_1\dbs = \ell_\infty^*$ and $\mu \in \cal M(\beta \NN)$ represents $x\dbs$ then $\bra{e_m^*}{x\dbs}_1 = \int f_md\mu = \mu(\{m\})$, and so $Wx\dbs = \big(\mu(\{m\})\big)_{m \ge 1} \in \ell_1$.   Furthermore, $\bra{e^*}{x\dbs}_1 = \int 1d\mu = \mu(\beta \NN$).   Thus, from \eqref{W4} and standard measure-theoretic arguments,
\begin{equation}\label{W5}
\bra{x\dbs}{w\trs}_2
= \ts\mu(\beta \NN) - \sum_{m = 1}^\infty\mu(\{m\}) = \mu(\beta \NN \setminus \NN).
\end{equation}
This discussion will be continued in \Rem{GTMrem}. 
\end{remark}
\section{$G^*$}\label{FINERsec}
\Thm{TRZthm} below extends the results proved in \eqref{GT1} and \eqref{GT2} for a particular element $x_0\dbs$ of $\ell_1\dbs$ to a general element $x\dbs$ of $\ell_1\dbs$. 
\begin{theorem}\label{TRZthm}
Let $x\dbs \in \ell_1\dbs$.   Then
\begin{equation}\label{TRZ2}
G^* x\dbs = -GWx\dbs - \Bra{x\dbs}{w\trs}_2e^* \in \ell_1^*
\end{equation}
and
\begin{equation}\label{TRZ1}
\bra{G^* x\dbs}{x\dbs}_1 = -{\bra{x\dbs}{w\trs}}_2^2.
\end{equation}
\end{theorem}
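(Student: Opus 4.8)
The plan is to compute $G^*x\dbs$ explicitly by testing it against an arbitrary $x \in \ell_1$, using the fact established in \Def{Hdef} that $Gx \in c$ with $\lim_{m\to\infty}(Gx)_m = -\bra{x}{e^*}_0$, together with the decomposition formula \eqref{W3} from \Thm{Wthm}. First I would write $\bra{x}{G^*x\dbs}_0 = \bra{Gx}{x\dbs}_1$ and apply \eqref{W3} with $x^* := Gx \in c$, so that $\Lambda = \lim_n (Gx)_n = -\bra{x}{e^*}_0$. This gives
\begin{equation*}
\bra{x}{G^*x\dbs}_0 = \bra{W x\dbs}{Gx}_0 + \bra{x\dbs}{w\trs}_2 \cdot \big(-\bra{x}{e^*}_0\big).
\end{equation*}
The first term $\bra{Wx\dbs}{Gx}_0$ equals $\bra{x}{G^*(\wh{Wx\dbs})}_0$ by the definition of adjoint in a reflexive-pairing sense — more carefully, since $Wx\dbs \in \ell_1$, we have $\bra{Wx\dbs}{Gx}_0 = \bra{Gx}{\wh{Wx\dbs}}_1$, but it is cleaner to use skewness: $\bra{Wx\dbs}{Gx}_0 = -\bra{x}{G(Wx\dbs)}_0$ by \eqref{SKEW1}. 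Substituting, $\bra{x}{G^*x\dbs}_0 = -\bra{x}{GWx\dbs}_0 - \bra{x\dbs}{w\trs}_2\bra{x}{e^*}_0 = \bra{x}{-GWx\dbs - \bra{x\dbs}{w\trs}_2 e^*}_0$. Since this holds for all $x \in \ell_1$, equation \eqref{TRZ2} follows.

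For \eqref{TRZ1}, I would take the pairing of \eqref{TRZ2} with $x\dbs$ in $\bra{\cdot}{\cdot}_1$:
\begin{equation*}
\bra{G^*x\dbs}{x\dbs}_1 = -\bra{GWx\dbs}{x\dbs}_1 - \bra{x\dbs}{w\trs}_2\bra{e^*}{x\dbs}_1.
\end{equation*}
The second term is handled by \eqref{W3} applied to $e^*$ (for which $\Lambda = 1$ and $We^*$... but $e^* \notin \ell_1$, so instead one uses \eqref{W4} directly): $\bra{e^*}{x\dbs}_1 = \bra{Wx\dbs}{e^*}_0 + \bra{x\dbs}{w\trs}_2$. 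For the first term, $GWx\dbs \in c$ since $Wx\dbs \in \ell_1$, so \eqref{W3} gives $\bra{GWx\dbs}{x\dbs}_1 = \bra{Wx\dbs}{GWx\dbs}_0 + \bra{x\dbs}{w\trs}_2 \lim_n (GWx\dbs)_n$. By skewness \eqref{SKEW2}, $\bra{Wx\dbs}{GWx\dbs}_0 = 0$, and by \eqref{H1}, $\lim_n(GWx\dbs)_n = -\bra{Wx\dbs}{e^*}_0$. Assembling these pieces,
\begin{equation*}
\bra{G^*x\dbs}{x\dbs}_1 = \bra{x\dbs}{w\trs}_2\bra{Wx\dbs}{e^*}_0 - \bra{x\dbs}{w\trs}_2\big(\bra{Wx\dbs}{e^*}_0 + \bra{x\dbs}{w\trs}_2\big) = -\bra{x\dbs}{w\trs}_2^2,
\end{equation*}
which is \eqref{TRZ1}.

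The main obstacle I anticipate is purely bookkeeping: keeping straight which of the three pairings $\bra{\cdot}{\cdot}_0$, $\bra{\cdot}{\cdot}_1$, $\bra{\cdot}{\cdot}_2$ is in play at each step, and ensuring that every application of \eqref{W3} or \eqref{W4} is to an element genuinely in $c$ (respectively that the skewness identity \eqref{SKEW1}–\eqref{SKEW2} is applied only to elements of $\ell_1$, where $G$ is literally defined). The substitution $x^* = Gx$ is legitimate precisely because $Gx \in c$, a fact already recorded; and $GWx\dbs \in c$ because $Wx\dbs \in \ell_1$ by \Lem{Wlem}(a)–(b). Once one is disciplined about this, the computation is a short chain of substitutions with no estimates required — the content is entirely concentrated in \eqref{W3} and the skewness of $G$.
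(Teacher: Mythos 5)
Your proposal is correct and follows essentially the same route as the paper: both parts rest on substituting $x^* = Gx$ into \eqref{W3} together with \eqref{H1} and the skewness of $G$. The only (cosmetic) difference is in \eqref{TRZ1}, where you evaluate $\bra{GWx\dbs}{x\dbs}_1$ by applying \eqref{W3} again to $GWx\dbs \in c$ and invoke \eqref{W4} for $\bra{e^*}{x\dbs}_1$, whereas the paper reaches the same intermediate quantities via the adjoint identity $\bra{GWx\dbs}{x\dbs}_1 = \bra{Wx\dbs}{G^*x\dbs}_0$, a second use of \eqref{TRZ2}, and the definitions of $W^*$ and $w\trs$.
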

\begin{proof}
Let $x \in \ell_1$.   Setting $x^* = Gx$ in \eqref{W3}, writing $\alpha$ for $\bra{x\dbs}{w\trs}_2$ to simplify the expressions, and using \eqref{SKEW1} (with  $w = Wx\dbs$) and \eqref{H1}, 
\begin{align*}
\ts\bra{x}{G^* x\dbs}_0
&= \ts\bra{Gx}{x\dbs}_1 = \bra{Wx\dbs}{Gx}_0 + \alpha\lim_{n \to \infty}(Gx)_n\\
&= -\bra{x}{GWx\dbs}_0 - \alpha\bra{x}{e^*}_0.
\end{align*}  
Since this holds for all $x \in \ell_1$, this completes the proof of \eqref{TRZ2}.   From \eqref{TRZ2}, the definition of $G^*$, \eqref{TRZ2} again, \eqref{SKEW2} (with $x = Wx\dbs$), the definition of $W^*$, 
 and the definition of $w\trs$ (in sequence),
\begin{align*}
\bra{G^* x\dbs}{x\dbs}_1
&=-\bra{GWx\dbs}{x\dbs}_1 - \alpha\bra{e^*}{x\dbs}_1\\
&=-\bra{Wx\dbs}{G^* x\dbs}_0 - \alpha\bra{e^*}{x\dbs}_1\\
&=\bra{Wx\dbs}{GWx\dbs}_0 + \alpha\bra{Wx\dbs}{e^*}_0 - \alpha\bra{e^*}{x\dbs}_1\\
&=\alpha\bra{Wx\dbs}{e^*}_0 - \alpha\bra{e^*}{x\dbs}_1\\
&= \alpha\big[\bra{x\dbs}{W^* e^*}_2 - \bra{x\dbs}{\wh{e^*}}_2\big] = -\alpha^2.
\end{align*}
This gives \eqref{TRZ1}, and completes the proof of \Thm{TRZthm}.
\end{proof}
\begin{remark}
There is an analysis of $G^*$ and $\bra{G^* x\dbs}{x\dbs}_1$ in \cite[Example 14.2.2, pp.\ 161--162]{HEINZ} that is, on the surface, different from the one presented in \Thm{TRZthm} above.   A linear operator $T\colon\ \ell_1 \to \ell_\infty$ is defined by
\begin{equation*}
\all\ x \in \ell_1,\ (Tx)_m = \ts x_m + 2\sum_{n > m}x_n.
\end{equation*}
If $n$ is odd, we define $y^{(n)} \in \ell_1$ by $y^{(n)} := (2,-2,\dots,2,-2,1,0,0,\dots)$, where the ``$1$'' is in the $n$th place.   Then
\begin{equation*}
T\big(y^{(n)}\big) = (2,-2,\dots,2,-2,1,0,0,\dots) + 2\big(-1,1,\dots,-1,1,0,0,0,\dots\big) = e_n^*.
\end{equation*}
Similarly, if $n$ is even, we define $y^{(n)} \in \ell_1$ by $y^{(n)} := (-2,2,\dots,2,-2,1,0,0,\dots)$.   Again, $T\big(y^{(n)}\big) = e_n^*$.   The analysis in \cite{HEINZ} rests on the assumption (see\break \cite[Eqn. (1), pp.\ 159]{HEINZ}) that, for all $x\dbs \in \ell_1\dbs$,
\begin{equation*}
\exs\ x \in \ell_1\ \st,\ \all\ x^* \in R(T), \bra{x^*}{\wh x}_1 = \bra{x^*}{x\dbs}_1.
\end{equation*}
In particular, the argument above implies that, for all $n \ge 1$, $\bra{e_n^*}{\wh x}_1 = \bra{e_n^*}{x\dbs}_1$.   Thus, for all $n \ge 1$, $x_n = \bra{e_n^*}{x\dbs}_1$.   Consequently, $x = Wx\dbs$, and the formulae for $G^*$ and $\bra{G^* x\dbs}{x\dbs}_1$ given in \cite{HEINZ} reduce to the more\break explicit ones given in \Thm{TRZthm} above.    
\end{remark}
\begin{remark}\label{GTMrem}
This is a continuation of \Rem{BETArem}. A comparison of \eqref{W5} and \eqref{TRZ1} leads to the conclusion that $\bra{G^* x\dbs}{x\dbs}_1 = -\mu(\beta \NN \setminus \NN)^2$.   This is exactly the formula obtained in Gossez, \cite[Example, p.\ 89]{GOSSEZRANGE}. 
\end{remark}

\end{document}